
\documentclass[12pt]{amsart}
\usepackage{amssymb,amscd,amsmath}
\usepackage[all]{xy}
\newtheorem{thm}{Theorem}[section]
\newtheorem{theorem}[thm]{Theorem}

\newtheorem{prop}[thm]{Proposition}
\newtheorem{lemma}[thm]{Lemma}

\newtheorem{rem}[thm]{Remark}

\newtheorem{lem}[thm]{Lemma}

\theoremstyle{remark}

\theoremstyle{definition}


\newcommand{\A}{{\mathsf{A}}}
\newcommand{\B}{{\mathsf{B}}}
\newcommand{\CC}{{\mathsf{C}}}
\newcommand{\D}{{\mathsf{D}}}
\newcommand{\E}{{\mathsf{E}}}
\newcommand{\FF}{{\mathsf{F}}}
\newcommand{\GG}{{\mathsf{G}}}
\renewcommand{\H}{{\mathsf{H}}}
\newcommand{\I}{{\mathsf{I}}}
\newcommand{\X}{\mathbf{X}}

\newcommand{\R}{{\mathbb{R}}}
\newcommand{\C}{{\mathbb{C}}}

\newcommand{\Z}{{\mathbb{Z}}}

\newcommand{\Tor}{\mathrm{Tor}}

\newcommand{\GL}{\mathrm{GL}}

\newcommand{\Span}{\mathrm{Span}}

\newcommand{\sgn}{\mathrm{sgn}}

\newcommand{\id}{\mathrm{id}}

\newcommand{\m}{\mathfrak m}
\newcommand{\M}{\mathfrak m}

\newcommand{\Alt}{{\raise 2pt\hbox{$\scriptstyle\bigwedge$}}}

\begin{document}

\title
{Coxeter Cochain Complexes}

\author{Michael Larsen}
\email{mjlarsen@indiana.edu}
\address{Department of Mathematics\\
    Indiana University \\
    Bloomington, IN 47405\\
    U.S.A.}

\author{Ayelet Lindenstrauss}
\email{alindens@indiana.edu}
\address{Department of Mathematics\\
    Indiana University \\
    Bloomington, IN 47405\\
    U.S.A.}

\thanks{Michael Larsen was partially supported by NSF grant DMS-1101424.}

\begin{abstract}
We define the \emph{Coxeter cochain complex} of a Coxeter group $(G,S)$ with coefficients in a $\Z[G]$-module $A$.  This is closely related to the complex of simplicial cochains on the abstract simplicial complex $I(S)$ of the commuting subsets of $S$.  We give some representative computations of Coxeter cohomology
and explain the connection between the Coxeter cohomology for groups of type $\A$, 
the (singular) homology of certain configuration spaces, and the (Tor) homology of
certain local Artin rings.
\end{abstract}

\maketitle

\newpage
\section{Introduction}
\newcommand{\HH}{\mathrm{HH}}
\newcommand{\Sym}{\mathrm{Sym}}

Let $\Gamma$ be a graph on a vertex set $S$ and $I(S)$ the set of independent subsets of
$S$.
The set $I(S)$ is an abstract simplicial complex, known as the independence complex of $S$.
The (co)homology of the geometric realization $|I(S)|$ is a non-trivial natural invariant of $\Gamma$.  
There is a substantial literature concerning the topology of $|I(S)|$.
See, e.g., \cite{AB,BLN,EH,E,J,K} and the references contained therein.

Let $\Gamma$ be the graph underlying a Coxeter group
$G$ and $A$ be any $\Z[G]$-module.  The purpose of this paper is to 
introduce a natural cochain complex which can be thought of, very roughly, as a version of the cohomology
of $|I(S)|$ with coefficients in $A$.  If $A$ is a trivial $\Z[G]$-module, our cochain complex is isomorphic to a shifted version of the reduced simplicial cochain complex of    $|I(S)|$ with coefficients in the abelian group $A$, but if the action is nontrivial, it is something new.
The cohomology of our complex,
the \emph{Coxeter cohomology of $G$ with coefficients in $A$}, is in many cases explicitly computable. 
We present some simple calculations for finite Coxeter groups.
Coxeter cohomology with non-trivial coefficients 
is also related to the topology of certain configuration spaces and the homological algebra of algebras of
order $3$ fatpoints.

Indeed, Coxeter cohomology of groups of type $\A$ appears implicitly in work of Peeva, Reiner, and Welker \cite{PRW}.
Let $\X_{n,k}$ denote the subset of $\R^n$
consisting of $n$-tuples such that no $k$  of the coordinates are equal,
and let $R_{m,k} =  \C[x_1,\ldots,x_m]/\m^k$, where
$\m = (x_1,\ldots,x_m)$.
In \cite{PRW}, the (singular) homology of $\X_{n,k}$, regarded as a representation of $\A_{n-1}\cong S_n$,
is related to the (Tor) homology of $R_{m,k}$, regarded as a representation of $\GL_m(\C)$.  For $k=2$, this correspondence is almost obvious.  For $k=3$,
it is mediated by Coxeter cohomology.
Explicitly, there are isomorphisms
$$H_k(\X_{n,3}; \C)\cong H^{n-k}_C(\A_{n-1}, \C[\A_{n-1}])\otimes\sgn$$
as $\A_{n-1}$-modules and
$$\Tor_i^{R_{m,3}}(\C,\C)\cong \bigoplus_{0\leq j\leq i} H^j_C(\A_{i+j-1} , V^{\otimes (i+j)})$$
as $\GL_m(\C)$ representations, where $V\cong \C^m$ is the standard representation of $\GL_m(\C)$ on $\C^m$.

We remark that the original motivation for this paper was to understand the $k=3$ case of the result \cite{L} comparing Hochschild homology of the rings
$R_{m,k}$ to the homology of $(S^1)^n$ relative to the subspace of $n$-tuples for  which there is a $k$-fold collision.
We believe that it should be possible to relate both sides of this comparison theorem to the Coxeter cohomology of
affine Weyl groups of type $\A$ and hope to discuss this in a future paper.

We would like to thank Vic Reiner and Volkmar Welker for their pointers to relevant literature.

\smallskip
This paper is dedicated with love to the memory of Joram Lindenstrauss.

\section{Coxeter cochain complexes}
Let $G$ be a Coxeter group  with a set $S$ of generators.  Let $I(S)$ denote the set of all subsets of mutually
commuting elements of $S$ or, equivalently,
the set of independent subsets of the vertices of the Coxeter graph of $G$.

Let $A$ be a  $\Z[G]$-module and $<$ an ordering of $S$.
If $T\subset S$ and $s\in S\setminus T$ such that
$T\cup \{s\}\in I(S)$, we define
$$d_{T, s}\colon A^{\langle T\rangle}\to A^{\langle T\cup \{s\}\rangle}$$
by
$$d_{T,s}(v) = (-1)^{|\{t\in T\mid t < s\}|}(v+s(v)).$$
Thus, if $T\cup \{s,s'\}\in I(S)$ (where $s$ and $s'$ are distinct   elements of $S\setminus T$),
then $s$ and $s'$ commute, so we have
\begin{equation}
\label{square}
d_{T\cup\{s\},s'}\circ d_{T,s}(a) + d_{T\cup\{s'\},s}\circ d_{T,s'}(a) = 0.
\end{equation}

We define
the \emph{Coxeter cochain complex} of $G$ with respect to $A$ and $<$ 
to be the cochain complex $X_C^\cdot := X_C^\cdot(G,S,<,A)$  where
$$X_C^k = \bigoplus_{\{T\in I(S): |T| = k\}} A^{\langle T\rangle},$$
and $d^k\colon X_C^k\to X_C^{k+1}$ is given by $\sum_{T,s} d_{T,s}$
where the sum is taken over all $k$-element subsets $T$ of $S$ and for every such $T$, over all $s\not\in T$
for which $T\cup\{s\}\in I(S)$.  The fact that $d^{k+1}\circ d^k = 0$ follows from equation (\ref{square}).

If $<_1$ and $<_2$ are both orderings of $S$, then the complexes 
$$X_{i,C}^\cdot := X_C^\cdot(G,S,<_i,A)$$
for $i=1,2$ are isomorphic to one another.  Indeed, we map
the summand associated to $T\in I(S)$ in $X_{1,C}^\cdot $ to the summand associated to $T$ in $X_{2,C}^\cdot$ via
multiplication by  a sign $\epsilon_T$, which is the sign of the permutation of $T$ which, applied to the ordering of
$T$ by $<_1$, produces the ordering of $T$ by $<_2$.  For this reason, for the remainder of the paper, the choice
ordering of $S$ will be ignored.  We understand a Coxeter group $G$ to be endowed with a fixed set $S$ of generators,
which are henceforth omitted from our notation.  The cohomology of $X_C^\cdot (G,A)$ will be denoted 
$H^i_C(G,A)$.  We often use Killing-Cartan notation for $G$, for instance, writing $\A_n$ for $S_{n+1}$.

\begin{rem}
 \label{geometric}
When $A$ is a trivial $\Z[1/2][G]$-module, 
the Coxeter cochain complex $X^\cdot_C(G,A)$ is isomorphic to a shift of the reduced complex of the simplicial cochains on the independence complex $I(S)$ with coefficients in $A$ as an abelian group, and so
$$H^i_C(G,A)\cong\tilde H^{i-1} (|I(S)|; A)$$
for all $i\geq 0$.
\end{rem}
\begin{proof}
In the Coxeter cochain complex. each  $X^k_C(G,A)$ is spanned over $A$ by independent sets $T\subseteq S$ consisting of $k$ elements, just as the $(k-1)$-dimensional simplicial cochains on $|I(S)|$ are; $X^0_C(G,A)\cong A$ including into   $X^1_C(G,A)$ when $S\neq \emptyset$ is the reduction.   The boundary maps in both cases agree except for multiples of $2$, which is invertible in $A$, so one can map the Coxeter cochain complex to the simplicial cochain complex dividing by $2^k$ in each degree $k$ to get the desired chain isomorphism.
\end{proof}

\begin{lem}
\label{ses}
If  $0\to A_1\to A_2\to A_3\to 0$ is a short exact sequence of $\Z[1/2][G]$-modules, then
there is a long exact sequence
$$0\to H^0_C(G,A_1)\to H^0_C(G,A_2)\to H^0_C(G,A_3)\to H^1_C(G,A_1)\to \cdots.$$
Moreover, if the sequence is split, we have $H^i(G,A_2)\cong H^i(G, A_1)\oplus H^i(G, A_3)$ for all $i$.
\end{lem}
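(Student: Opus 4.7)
The plan is to show that the assignment $A \mapsto X_C^\cdot(G,A)$ is an exact functor from the category of $\Z[1/2][G]$-modules to the category of cochain complexes of abelian groups. Once this is established, the long exact sequence follows immediately from applying the snake lemma to the resulting short exact sequence of cochain complexes $0 \to X_C^\cdot(G,A_1) \to X_C^\cdot(G,A_2) \to X_C^\cdot(G,A_3) \to 0$, which is a standard fact of homological algebra.

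Since each $X_C^k(G,A)$ is a direct sum over $T \in I(S)$ with $|T| = k$ of invariants $A^{\langle T\rangle}$, and direct sums preserve exactness, it suffices to show that for each $T \in I(S)$, the functor $A \mapsto A^{\langle T\rangle}$ is exact on $\Z[1/2][G]$-modules. Here, the elements of $T$ are Coxeter generators (hence of order $2$) which pairwise commute, so $\langle T\rangle$ is an elementary abelian $2$-group of order $2^{|T|}$. This is the crucial use of the hypothesis that we work with $\Z[1/2][G]$-modules: the order of $\langle T\rangle$ is invertible in the coefficient ring, so we have a well-defined averaging (norm) operator
$$e_T := \frac{1}{|\langle T\rangle|} \sum_{g \in \langle T\rangle} g$$
acting on any $\Z[1/2][G]$-module $A$. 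This operator is an idempotent projection onto $A^{\langle T\rangle}$, so $A^{\langle T\rangle}$ is naturally a direct summand of $A$ functorially in $A$. Taking a direct summand is exact, so $A \mapsto A^{\langle T\rangle}$ is exact, as required.

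The main (very mild) obstacle is checking compatibility with the differentials so that the sequence of complexes is genuinely a short exact sequence of complexes, not just an exact sequence of graded abelian groups in each degree. But this is automatic: the differential $d_{T,s}$ is defined by the operator $v \mapsto \pm(v + s(v))$, which is functorial in $A$, so a map of $\Z[G]$-modules $f\colon A \to A'$ induces a chain map $X_C^\cdot(G,A) \to X_C^\cdot(G,A')$. Hence the snake lemma applies and produces the long exact sequence in $H_C^\cdot$.

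For the final assertion, suppose the short exact sequence splits as $\Z[G]$-modules. Then fixing a splitting $A_2 \cong A_1 \oplus A_3$ and applying the additive functor $X_C^\cdot(G,-)$ termwise, we obtain an isomorphism of cochain complexes $X_C^\cdot(G,A_2) \cong X_C^\cdot(G,A_1) \oplus X_C^\cdot(G,A_3)$. Taking cohomology commutes with finite direct sums, yielding the desired decomposition $H^i_C(G,A_2) \cong H^i_C(G,A_1) \oplus H^i_C(G,A_3)$ for every $i$.
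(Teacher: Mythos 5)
Your proof is correct and follows essentially the same approach as the paper: exactness of the functor $A\mapsto A^{\langle T\rangle}$ (which you justify via the averaging idempotent, using invertibility of $|\langle T\rangle|=2^{|T|}$), yielding a short exact sequence of complexes, then the snake lemma for the long exact sequence and additivity for the split case. The only thing the paper adds that you omit is the parenthetical observation that the split statement holds for arbitrary $\Z[G]$-modules, with no need for $2$ to be invertible, since it uses only additivity of $X_C^\cdot(G,-)$ rather than exactness.
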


\begin{proof}
The long exact sequence comes from the (vertical) exact sequence of (horizontal) complexes
$$0\to X_C^\cdot(G,A_1) \to X_C^\cdot(G,A_2) \to X_C^\cdot(G,A_3)\to 0.$$
The exactness of the columns follows from the exactness of the functor $A\mapsto A^{\langle T\rangle}$,
which, in turn, depends on the fact that the order of $\langle T\rangle$ is invertible in $\Z[1/2]$.
If the original short exact sequence of modules is split, the isomorphism is trivial (and holds for general $\Z[G]$-modules).
\end{proof}

Henceforth, we restrict attention to the case of modules $A=V$ which are vector spaces over a field $K$.
\begin{lem}
\label{kunneth}
Let $V_1, V_2,\ldots, V_n$ be representations of Coxeter groups $G_1,G_2,\ldots,$ $ G_n$, and let
$V = V_1\boxtimes\cdots\boxtimes V_n$ and $G = G_1\times \cdots \times G_n$.  Then
$$H^i_C(G,V) \cong  \bigoplus_{i_1+\cdots+i_n = i} H^{i_1}_C(G_1,V_1)\otimes\cdots\otimes H^{i_n}_C(G_n,V_n).$$
\end{lem}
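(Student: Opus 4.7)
The plan is to reduce the statement to the algebraic Künneth formula for tensor products of cochain complexes over a field, which applies in particular since we are working over $K$ so all modules are flat. It suffices by induction to handle the case $n=2$; so set $G = G_1 \times G_2$, $S = S_1 \sqcup S_2$, and $V = V_1 \otimes V_2$.

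The first step is a combinatorial identification of the Coxeter cochain complex as a tensor product. Since any $s \in S_1$ commutes with any $s' \in S_2$ in the product group, a subset $T \subseteq S$ lies in $I(S)$ if and only if $T_i := T \cap S_i$ lies in $I(S_i)$ for $i=1,2$; in particular $|T| = |T_1| + |T_2|$. Moreover, $\langle T\rangle = \langle T_1\rangle \times \langle T_2\rangle$, and this group acts on $V_1 \otimes V_2$ factor by factor, so
\[
V^{\langle T\rangle} \;=\; V_1^{\langle T_1\rangle} \otimes V_2^{\langle T_2\rangle}.
\]
Summing over $T \in I(S)$ with $|T|=k$ and rearranging as a sum over $(k_1,k_2)$ with $k_1+k_2 = k$ identifies $X_C^k(G,V)$ with the degree $k$ part of $X_C^\cdot(G_1,V_1) \otimes X_C^\cdot(G_2,V_2)$ as a graded vector space.

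The next step, which is the main technical point and where sign care is required, is verifying that the differentials agree under this identification. Fix an ordering $<$ of $S$ in which every element of $S_1$ precedes every element of $S_2$, and order $S_1$ and $S_2$ by the restriction of $<$. Given $T = T_1 \sqcup T_2 \in I(S)$ and $s \in S \setminus T$ with $T \cup \{s\} \in I(S)$, either $s \in S_1 \setminus T_1$ or $s \in S_2 \setminus T_2$. In the first case $\{t \in T : t < s\} = \{t \in T_1 : t < s\}$ and $s$ acts trivially on $V_2$, so on a pure tensor $v_1 \otimes v_2$ we get
\[
d_{T,s}(v_1 \otimes v_2) \;=\; d_{T_1,s}(v_1) \otimes v_2.
\]
In the second case $\{t \in T : t < s\} = T_1 \sqcup \{t \in T_2 : t < s\}$ and $s$ acts trivially on $V_1$, yielding
\[
d_{T,s}(v_1 \otimes v_2) \;=\; (-1)^{|T_1|}\, v_1 \otimes d_{T_2,s}(v_2).
\]
These are precisely the Koszul sign rules for the differential of a tensor product of cochain complexes, so the identification $X_C^\cdot(G,V) \cong X_C^\cdot(G_1,V_1) \otimes X_C^\cdot(G_2,V_2)$ is an isomorphism of cochain complexes.

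Finally, since $K$ is a field, every term $X_C^k(G_i,V_i)$ is a flat $K$-module, and the algebraic Künneth theorem produces a natural isomorphism of cohomology
\[
H^i_C(G,V) \;\cong\; \bigoplus_{i_1+i_2=i} H^{i_1}_C(G_1,V_1) \otimes H^{i_2}_C(G_2,V_2),
\]
with no $\Tor$ correction. Iterating gives the statement for general $n$. The only real obstacle is the sign bookkeeping in the previous paragraph; once an appropriate ordering of $S$ is chosen, it collapses to the standard Koszul sign convention.
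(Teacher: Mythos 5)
Your proof is correct and follows the same approach as the paper: identify the Coxeter cochain complex of the product with the tensor product of the factors' complexes, then apply the Künneth formula over the field $K$. You supply the sign bookkeeping that the paper leaves implicit, but the underlying argument is identical.
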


\begin{proof}
We understand $G$ to be endowed with a set of generators $S = S_1\cup\cdots\cup S_n$, where $S_i$ is the set of
generators of $G_i$.  Then the Coxeter cochain complex of $(G,S)$ with respect to $V$ is isomorphic to the tensor power of the complexes of $(G_i,S_i)$ with respect to $V_i$.  The lemma follows immediately from the K\"unneth formula.
\end{proof}

Given a Coxeter group $G$ with graph $\Gamma$ and $s\in S$, we define $G_s := \langle S\setminus \{s\}\rangle$ 
and $G^s := \langle S\setminus B_s(1)\rangle$, where $B_s(1)$ denotes the set of elements of $S$ within distance $1$
of $s$ on $\Gamma$, i.e., the set consisting of $s$ itself and all its adjacent vertices.  Note that every element
of $G^s$ commutes with $s$, so the invariant space
$V^s$ is a $G^s$-representation.

\begin{prop}
\label{les}
For every  Coxeter group $G$, every representation $V$ of $G$ of characteristic $\neq 2$, and every $s\in S$, there exists
a long exact sequence
$$\cdots \to H_C^{i-1}(G^s,V^s)\to H_C^i(G,V)\to H_C^i(G_s,V)\to H_C^i(G^s,V^s)\to \cdots.$$
\end{prop}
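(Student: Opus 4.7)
The plan is to construct a short exact sequence of Coxeter cochain complexes and extract the desired long exact sequence from the resulting long exact sequence in cohomology. Specifically, I split $X_C^\cdot(G,V)$ according to whether the indexing subset $T\in I(S)$ contains the distinguished element $s$ or not.

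Write $X_C^k(G,V) = K^k \oplus Q^k$, where $K^k$ collects the summands $V^{\langle T\rangle}$ with $s \in T$ and $Q^k$ those with $s \notin T$; this is a degreewise direct sum decomposition. From the formula for $d_{T,s'}$, one sees that $s \in T$ implies $s \in T \cup \{s'\}$, so $K^\cdot$ is a subcomplex and $Q^\cdot$ its quotient, giving the short exact sequence
$$0 \longrightarrow K^\cdot \longrightarrow X_C^\cdot(G,V) \longrightarrow Q^\cdot \longrightarrow 0.$$

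The quotient $Q^\cdot$ is canonically identified with $X_C^\cdot(G_s,V)$: the independent subsets of $S$ avoiding $s$ are exactly the independent subsets of $S\setminus\{s\}$, the cochain modules $V^{\langle T\rangle}$ coincide intrinsically, and the differentials $d_{T,s'}$ with $s'\neq s$ surviving in the quotient match those of $X_C^\cdot(G_s,V)$. For the subcomplex, the bijection $T \mapsto T\setminus\{s\}$ identifies the index set of $K^k$ with that of $X_C^{k-1}(G^s,V^s)$: for $T \ni s$ in $I(S)$, the remaining elements of $T$ commute with $s$ and so lie in $S^s$, and the pairwise commutation of the elements of $T$ gives the internal direct product $\langle T\rangle = \langle T\setminus\{s\}\rangle \times \langle s\rangle$, yielding $V^{\langle T\rangle} = (V^s)^{\langle T\setminus\{s\}\rangle}$.

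To match the differentials, I order $S$ with $s$ as the minimum element, permissible by the ordering-independence of $X_C^\cdot$ noted after its definition. Under the identification $K^k \leftrightarrow X_C^{k-1}(G^s,V^s)$, the differential $K^k \to K^{k+1}$ then corresponds to $-1$ times the differential $X_C^{k-1}(G^s,V^s) \to X_C^k(G^s,V^s)$, because the count $|\{t \in T \mid t < s''\}|$ exceeds the analogous count for $T \setminus \{s\}$ by exactly one. This uniform sign is absorbed by the standard trick of multiplying by $(-1)^k$ in degree $k$, producing a chain isomorphism under which $H^i(K^\cdot) \cong H_C^{i-1}(G^s,V^s)$. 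The sign bookkeeping is the only nontrivial technical point; everything else is a direct reading of the definitions. The standard long exact sequence of the short exact sequence of complexes then yields the claimed long exact sequence.
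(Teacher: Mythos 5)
Your proof is correct and follows essentially the same approach as the paper: split the direct sum defining $X_C^k(G,V)$ according to whether $s\in T$, identify the quotient with $X_C^\cdot(G_s,V)$ and the subcomplex with a shift of $X_C^\cdot(G^s,V^s)$, and take the associated long exact sequence. Your attention to the sign discrepancy (and its resolution via a degree-dependent sign, or equivalently the observation that negating all differentials leaves cohomology unchanged) is a detail the paper leaves implicit, but the underlying decomposition and argument are the same.
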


\begin{proof}
There is a short exact sequence of complexes
$$0\to X_C^{*-1} (G^s,V^s) \to
 X_C^*(G,V) \to 
 X_C^*(G_s,V)\to 0$$
which comes from 
\begin{multline*}
0\to \!\!\!\!\!\!  \bigoplus_{\{T\in I(S)\colon |T|=k, s\in T\} } \!\!\!\!\!\!   V^{\langle T \rangle}  \to \!\!\!\!\!\! 
\bigoplus_{\{T\in I(S)\colon |T|=k\} } \!\!\!\!\!\!  V^{\langle T \rangle}  
\to \!\!\!\!\!\! 
\bigoplus_{\{T\in I(S)\colon |T|=k, s\notin T\} }\!\!\!\!\!\!  V^{\langle T \rangle}  \to 0
\end{multline*}
where the first map is the obvious inclusion and the second the obvious quotient map.  (Note that if 
$T\in I(S\setminus B_s(1))$ and $|T|=k-1$, then $T\cup \{s\}\in I(S)$ has $k$ elements and $(V^s)^{\langle T \rangle} = V^{\langle T\cup \{s\}\rangle}$.)
\end{proof}

\section{Reflection representations}

In this section, we apply Proposition~\ref{les} to explicitly calculate the Coxeter cohomology of all the finite Coxeter groups with coefficients in their respective reflection representations.  It turns out that the results will depend only on the graph of the Coxeter group, not on its labeling.  Lemma \ref{twovertices} shows this for the Coxeter groups generated by two reflections---the argument does not depend on the angle between the two axes of reflection, so long as it is not $\pi/2$.  Once that is established, the independence from labeling is propagated to larger graphs because of the inductive nature of the calculation.

The result is as follows.

\begin{thm}
\label{main-ref}
Let $G$ be a Coxeter group of rank $n$, and $V_n$  the reflection representation of $G$.  Then
$H_C^i(G,V_n) = 0$ except in the following cases:
\begin{enumerate}
\item $H_C^i(G,V_n) = \R^{i-1}$ if $n=3i-1$ and $G\notin\{\D_n,\E_8\}$.
\item $H_C^i(G,V_n) = \R^{2i}$ if $n=3i$ and $G\notin\{\D_n,\E_6\}$.
\item $H_C^i(G,V_n) = \R^{i+1}$ if $n=3i+1$ and $G\notin\{\D_n,\E_7\}$.
\item $H_C^i(\D_{3i+3},V_{3i+3}) = \R^{i+2}$.
\item $H_C^i(\D_{3i+4},V_{3i+4}) = \R^{2i+3}$.
\item $H_C^i(\D_{3i+5},V_{3i+5}) = \R^{i+1}$.
\item $H_C^2(\E_6, V_6) = \R$.
\item $H_C^2(\E_7, V_7) = \R^2$.
\item $H_C^1(\E_8, V_8) = \R$.
\end{enumerate} 
\end{thm}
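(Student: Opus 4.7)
The plan is to argue by strong induction on $n$ using Proposition~\ref{les} as the main tool. For a Coxeter group $G$ of rank $n$ with Coxeter graph $\Gamma$, I would choose $s \in S$ to be a leaf of $\Gamma$ and let $s'$ denote its unique neighbor. Then $G_s = \langle S \setminus \{s\} \rangle$ has rank $n-1$ with graph $\Gamma \setminus \{s\}$, and $G^s = \langle S \setminus B_s(1)\rangle$ has rank $n-2$ with graph $\Gamma \setminus \{s, s'\}$; either may become reducible. The long exact sequence of Proposition~\ref{les} then reduces the computation of $H_C^*(G, V_n)$ to the Coxeter cohomology of these two strictly smaller groups with coefficients in the restricted modules $V_n|_{G_s}$ and $V_n^s|_{G^s}$.

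Next I would decompose the coefficient modules into direct summands on each of which the inductive hypothesis applies. Since the Coxeter cochain complex is additive in its coefficient module, such decompositions split the cohomology. As a $G_s$-module, $V_n$ is the reflection representation of $G_s$ (spanned by the simple roots $\alpha_{s_i}$ for $s_i \neq s$) direct sum a one-dimensional trivial summand. As a $G^s$-module, $V_n^s$ splits as the reflection representation of $G^s$ (spanned by the simple roots $\alpha_{s_i}$ for $s_i \in G^s$, which all lie in $\alpha_s^\perp$ because commuting reflections have orthogonal simple roots) direct sum a one-dimensional piece, which is trivial precisely when every generator of $G^s$ commutes with $s'$ and otherwise carries a sign-like action. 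When $\Gamma \setminus \{s\}$ or $\Gamma \setminus \{s, s'\}$ is disconnected, Lemma~\ref{kunneth} splits the cohomology across the irreducible factors.

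Contributions from the trivial summands are computable via Remark~\ref{geometric} as the reduced cohomology of the relevant independence complex shifted by one; for most small graphs arising in the induction these groups are concentrated in a single degree or vanish entirely, so these summands contribute little to the final answer. After these simplifications, for the Coxeter graphs that are simple paths (the types $\A, \B, \FF, \GG, \H, \I$) the induction cleanly produces the trichotomy in items (1)--(3) organized by $n \bmod 3$, once the base cases $n = 1$ (direct computation) and $n = 2$ (Lemma~\ref{twovertices}, ensuring that the edge label does not affect the answer) are verified.

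The hardest part will be the exceptional types $\D_n$ and $\E_6, \E_7, \E_8$. In $\D_n$ the trivalent branching of $\Gamma$ forces $G^s$ to be of type $\A_{n-3}$ or $\A_{n-3} \sqcup \A_1$ for any choice of leaf $s$, contributing an extra term to the long exact sequence that shifts the answer relative to type $\A$; tracking this through the three residues of $n$ modulo $3$ should produce items (4)--(6). For each of $\E_6, \E_7, \E_8$ one applies Proposition~\ref{les} at a leaf on one of the three arms and analyzes the connecting maps explicitly, using the already-known values for the $\A$- and $\D$-type parabolic subgroups; in each case one must verify that a particular connecting homomorphism is nonzero in a critical degree, yielding the strictly smaller dimensions recorded in (7)--(9). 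Pinning down these connecting maps in the exceptional cases is the technical core of the argument.
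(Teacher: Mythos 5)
Your overall framework---inducting via the long exact sequence of Proposition~\ref{les} and decomposing $V_n|_{G_s}$ and $V_n^s|_{G^s}$ as reflection representation plus a trivial summand---is sound, and the base cases are the right ones. But the choice of $s$ as a \emph{leaf} is where the plan runs into serious trouble, and you correctly sense it when you note that ``one must verify that a particular connecting homomorphism is nonzero.'' That verification is not a technicality to be pinned down later; with a leaf it is unavoidable and difficult. For example, in type $\A_n$ with $s$ a leaf, $G_s=\A_{n-1}$ and $G^s=\A_{n-2}$, and by the induction plus Proposition~\ref{no-cycle} the group $H^i_C(G_s, V_n|_{G_s})$ is supported where $n\in\{3i,3i+1,3i+2\}$ while $H^i_C(G^s, V_n^s|_{G^s})$ is supported where $n\in\{3i+1,3i+2,3i+3\}$. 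These overlap at $n\in\{3i+1,3i+2\}$, so the connecting map $H^i_C(G_s,\cdot)\to H^i_C(G^s,\cdot)$ genuinely relates two nonzero groups, and computing its rank requires tracking explicit cochains through the zigzag---precisely the kind of bare-hands argument Proposition~\ref{les} was meant to avoid.

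The paper instead chooses $s$ to be a \emph{high-valence} vertex: the third vertex from one end in a path (type $\A$, $\B$, $\FF$, $\H$), or the trivalent vertex (types $\D$, $\E$). This buys two things. First, $G_s$ and $G^s$ then contain $\A_1$ factors on which the restricted coefficient module has trivial action; since $H^*_C(\A_1,\R)=0$, Lemma~\ref{kunneth} wipes out whole summands---in the $\D_n$ and $\E_n$ cases $H^*_C(G_s, V_n|_{G_s})$ vanishes identically, so the long exact sequence degenerates to an isomorphism $H^i_C(G,V_n)\cong H^{i-1}_C(G^s,V_n^s)$. Second, in the $\A_n$ case (with $s=s_{n-2}$, $G_s=\A_{n-3}\times\A_2$, $G^s=\A_{n-4}\times\A_1$), the supports computed in the paper's equations (\ref{wos}) and (\ref{woball}) land in disjoint sets of degrees, so every connecting map $H^i_C(G_s,\cdot)\to H^i_C(G^s,\cdot)$ is zero for degree reasons and the long exact sequence splits into split short exact sequences with no further analysis. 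Your plan misses this vertex-selection trick, which is the real content of the proof.

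Two smaller issues. For $\D_n$, removing the long-tail leaf gives $G^s=\D_{n-2}$, not a group of type $\A$ as you assert; only removing a fork leaf gives $G^s=\A_{n-3}\times\A_1$. And for the $\E_n$ cases you would again face overlapping degrees and nonzero connecting maps; the paper dodges this entirely by excising the trivalent vertex, after which $G_s=\A_2\times\A_1\times\A_{n-4}$ and $G^s=\A_1\times\A_{n-5}$ each have an $\A_1$ factor that collapses most of the Künneth terms, reducing the computation to a handful of one-dimensional contributions in isolated degrees.
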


We will prove this theorem in several steps,
beginning with a result about Coxeter cohomology with trivial coefficients:

 \begin{prop}
\label{no-cycle}
For all $n\ge 1$,
\begin{equation}
\label{An}
\dim_K H_C^i(\A_n,K) = 
\begin{cases}
1&\mbox{if } n\in\{3i-1,3i\}, \\
0&\mbox{otherwise.}
\end{cases}
\end{equation}\end{prop}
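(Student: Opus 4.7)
My plan is to reduce the computation, via Remark~\ref{geometric}, to the topology of the independence complex of a path. The Coxeter graph of $\A_n$ is the path on the generators $s_1,\dots,s_n$, and $I(S)$ is this path's independence complex. Under the implicit characteristic-$\neq 2$ hypothesis (required already for Proposition~\ref{les}), Remark~\ref{geometric} gives
$$H^i_C(\A_n, K)\;\cong\;\tilde H^{i-1}\bigl(|I(S)|;\,K\bigr),$$
so it suffices to show that $\tilde H^{k-1}(|I(S)|;K)=K$ when $n\in\{3k-1,3k\}$ and that the reduced cohomology of $|I(S)|$ vanishes when $n=3k+1$.

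The key step is the recursion
$$\tilde H^j(|I(S)|;K)\;\cong\;\tilde H^{j-1}(|I(S')|;K),\qquad S':=\{s_4,\dots,s_n\},$$
valid for $n\ge 4$, which I would establish by Mayer-Vietoris on the cover $|I(S)|=A\cup B$, where $A$ is the subcomplex of faces not containing $s_3$ and $B$ is the closed star of $s_3$. The subcomplex $B$ is a cone and hence contractible. The intersection $A\cap B$ is the link of $s_3$, namely $|I(\{s_1\}\sqcup\{s_5,\dots,s_n\})|$; this is a join involving the one-vertex complex $|I(\{s_1\})|$, so it is also a cone and contractible. Mayer-Vietoris therefore collapses to give $\tilde H^j(|I(S)|)\cong\tilde H^j(A)$. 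On the other hand, removing $s_3$ disconnects the path into $\{s_1,s_2\}$ and $S'$, so $A$ is the simplicial join $|I(\{s_1,s_2\})|*|I(S')|$; since $|I(\{s_1,s_2\})|$ is a pair of disjoint points, this join is a suspension, and $\tilde H^j(A)\cong\tilde H^{j-1}(|I(S')|)$.

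Iterating the recursion reduces the computation to one of three paths of length at most three: a single vertex (contractible), two vertices (a pair of points, i.e.\ $S^0$), or three vertices (an isolated vertex together with an edge, again homotopy equivalent to $S^0$). Reading off the reduced cohomology of the resulting spheres or points yields the claimed formula.

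The main technical input is the Mayer-Vietoris step---verifying that both $B$ and the link of $s_3$ are contractible---while the rest is bookkeeping. As a purely algebraic alternative one could forgo Remark~\ref{geometric} and induct via Proposition~\ref{les} with $s=s_n$ (so $G_s=\A_{n-1}$ and $G^s=\A_{n-2}$): the residues $n\equiv 0,2\pmod 3$ fall out immediately from the long exact sequence because one of the two neighbors has vanishing cohomology in the relevant degrees, and only the case $n\equiv 1\pmod 3$ is genuinely delicate. It would require an explicit cocycle calculation to show that the LES map $K\to K$ between the one-dimensional cohomologies of $\A_{n-1}$ and $\A_{n-2}$ is nonzero---tracked by an invertible factor of $\pm 2$ contributed by the differential.
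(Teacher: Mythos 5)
Your Mayer--Vietoris argument is correct and establishes the result. The paper's own proof is terse: it gestures at an induction using Proposition~\ref{les} and Lemma~\ref{kunneth}, and otherwise cites Ehrenborg--Hetyei for the homotopy type of the path's independence complex. You instead supply a self-contained proof by first passing to topology via Remark~\ref{geometric} and then proving the period-3 suspension recursion $\tilde H^j(|I(S)|)\cong\tilde H^{j-1}(|I(S')|)$ directly. That recursion is exactly the topological shadow of the algebraic induction the paper has in mind: applying Proposition~\ref{les} with $s=s_3$ gives $G_s=\A_2\times\A_{n-3}$ and $G^s=\A_1\times\A_{n-4}$; since $H^*_C(\A_1,K)=0$ the $G^s$ term vanishes, and K\"unneth together with $H^*_C(\A_2,K)=K$ concentrated in degree $1$ yields $H^i_C(\A_n,K)\cong H^{i-1}_C(\A_{n-3},K)$, the algebraic counterpart of your suspension isomorphism. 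Your aside about the delicacy of the case $n\equiv 1\pmod 3$ when inducting with $s=s_n$ is accurate, but it is an artifact of picking a peripheral vertex; the paper's invocation of K\"unneth signals a vertex two in from an end, which sidesteps the issue because one of the two terms in the long exact sequence vanishes identically. Your observation that $\mathrm{char}\,K\neq 2$ is needed is a good catch: for $\A_1$ in characteristic $2$ the differential $v\mapsto 2v$ vanishes, so $H^0_C(\A_1,K)=K$ rather than $0$, showing the hypothesis (implicit in the $\Z[1/2]$ assumption of Remark~\ref{geometric}) is genuinely necessary.
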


\begin{proof}
This can be deduced easily from Proposition~\ref{les} and Proposition~\ref{kunneth} by induction on $|S|$.
It follows from the stronger result of Ehrenborg and Hetyei \cite{EH} that each $|I(S)|$ is either contractible or homotopy equivalent to a sphere and that it is a sphere of dimension $i$ if and only if $n\in\{3i-1,3i\}$.
\end{proof}

\begin{lemma}
\label{onevertex}
For $V_1$ the reflection representation of $A_1$,
$$
H_C^i(\A_1,V_1) =
\begin{cases}
\R&\hbox{if } $i=0$, \\
0&\mbox{otherwise.}
\end{cases}
$$
\end{lemma}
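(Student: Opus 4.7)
The plan is to compute the Coxeter cochain complex $X_C^\cdot(\A_1, V_1)$ directly from the definition, since the group is so small that everything can be written down explicitly.

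First I would unpack the data. The group $\A_1$ has $S = \{s\}$ with the single relation $s^2 = 1$, and its reflection representation $V_1 \cong \R$ has $s$ acting by $-1$. The independence complex $I(S)$ consists of only two subsets, $\emptyset$ and $\{s\}$, so the Coxeter cochain complex is concentrated in degrees $0$ and $1$, with $X_C^k = 0$ for all $k \geq 2$.

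Next I would identify the two nonzero terms. By definition, $X_C^0 = V_1^{\langle \emptyset \rangle} = V_1 \cong \R$, and $X_C^1 = V_1^{\langle s \rangle} = V_1^s$. Since $s$ acts as $-1$ on $V_1$, the invariant subspace $V_1^s$ is zero. Thus the complex reduces to
\[
0 \longrightarrow \R \longrightarrow 0 \longrightarrow 0 \longrightarrow \cdots,
\]
from which one reads off $H_C^0(\A_1, V_1) = \R$ and $H_C^i(\A_1, V_1) = 0$ for all $i \geq 1$. One could alternatively verify that the differential $d_{\emptyset,s}\colon V_1 \to V_1^s$ sends $v \mapsto v + s(v) = v - v = 0$, which is consistent (and makes explicit that $d^0 = 0$).

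There is no real obstacle here: this lemma is a direct computation serving as a base case, and the only point requiring mention is that the reflection $s$ acts by $-1$ on the one-dimensional reflection representation, so $V_1^s = 0$.
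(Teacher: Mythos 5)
Your proof is correct and follows essentially the same route as the paper's: write down the two-term complex $0 \to V_1 \to V_1^{\langle s\rangle} \to 0$ and observe that $V_1^{\langle s\rangle} = 0$ because $s$ acts by $-1$ on the one-dimensional reflection representation. You simply spell out the justification for $V_1^{\langle s\rangle} = 0$ more explicitly than the paper does.
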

\begin{proof}
The Coxeter cochain cohomology will, in this case, be the cohomology of the complex $0\to V_1\to V_1^{\langle A_1\rangle}\to 0$, and
$V_1^{\langle A_1\rangle}=0$. 
\end{proof}

\begin{lemma}
\label{twovertices}
If $G$ is any of the finite Coxeter groups with two generators ($\A_2$, $\B_2=\CC_2$, $\GG_2$, $\H_2$, or any $\I_2( p )$) and $V_2$ its reflection representation,
$$
H_C^*(G,V_2) =0.$$
\end{lemma}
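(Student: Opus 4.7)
The plan is to reduce to a direct calculation of a two-term complex. The key observation is that for every group on the list, the two simple reflections $s_1, s_2 \in S$ fail to commute (their Coxeter relation has order $\geq 3$), so the independence complex $I(S)$ has no $1$-simplex; it consists only of $\emptyset$, $\{s_1\}$, $\{s_2\}$. Therefore the Coxeter cochain complex $X_C^\cdot(G, V_2)$ is concentrated in degrees $0$ and $1$, and reads
$$0 \longrightarrow V_2 \xrightarrow{\;d^0\;} V_2^{\langle s_1\rangle} \oplus V_2^{\langle s_2\rangle} \longrightarrow 0,$$
with $d^0(v) = (v + s_1 v, \, \pm(v + s_2 v))$ by the definition in Section 2. (In the degenerate case in which one wishes to read $\I_2(2) = \A_1 \times \A_1$ into the list, one checks that $V_2^{\langle s_1,s_2\rangle} = 0$ since $s_1 s_2 = -1$ in the reflection representation, so the complex still has the same shape.)

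Next I compare dimensions. Each $s_i$ acts on the $2$-dimensional reflection representation $V_2$ as a reflection, so $\dim V_2^{\langle s_i\rangle} = 1$; hence $\dim X_C^1 = 2 = \dim X_C^0 = \dim V_2$. Thus it suffices to prove that $d^0$ is injective.

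To compute the kernel, I note that $v \in \ker d^0$ iff $s_1 v = -v$ and $s_2 v = -v$, i.e., iff $v$ lies in the $(-1)$-eigenspace of both $s_i$. These eigenspaces are the one-dimensional lines spanned by the two simple roots $\alpha_1, \alpha_2$ of the reflection representation. Since $s_1$ and $s_2$ do not commute, the roots $\alpha_1, \alpha_2$ are linearly independent (standard fact about the geometric/reflection representation of an irreducible rank-$2$ finite Coxeter group: the Gram matrix of the simple roots is nondegenerate except in the commuting case, and in our setting this is immediate from $-\cos(\pi/p) \neq 0$ for $p \geq 3$). Hence $\ker d^0 = 0$, so $d^0$ is an isomorphism and $H_C^i(G, V_2) = 0$ in all degrees.

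The main (and only) conceptual point is the linear independence of $\alpha_1, \alpha_2$ in the reflection representation, which is not really an obstacle but is the one input that distinguishes this lemma from the trivial observation about the shape of the complex. Everything else is just writing down dimensions and solving a $2 \times 2$ linear system.
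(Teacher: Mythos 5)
Your proof is correct and takes essentially the same route as the paper: both reduce to the two-term complex $0 \to V_2 \to V_2^{\langle s_1\rangle} \oplus V_2^{\langle s_2\rangle} \to 0$ and show the middle map is an isomorphism between two-dimensional spaces; you do it by computing the kernel as the intersection of the $(-1)$-eigenspaces, while the paper writes the map explicitly in the basis $\alpha_1,\alpha_2$ and checks both components are nonzero. One small correction to your parenthetical: the linear independence of $\alpha_1,\alpha_2$ has nothing to do with $s_1,s_2$ failing to commute, and the Gram matrix of the simple roots of a rank-two finite Coxeter group (determinant $\sin^2(\pi/m)$) is nondegenerate for \emph{every} finite order $m$, including $m=2$ (the commuting case, where it is the identity); the simple roots are a basis of the reflection representation by construction, so $\ker d^0 = \Span(\alpha_1)\cap\Span(\alpha_2)=0$ holds for free, and the noncommuting hypothesis is only used to determine the shape of the complex.
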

\begin{proof}
Let the generators of $G$ by $s_1$ and $s_2$, where $s_i$ reflects along the line perpendicular to the root $\alpha_i$, $i=1,2$.  The
Coxeter cochain cohomology will be the cohomology of the complex $0\to V_2\to V_2^{\langle \{s_1\} \rangle}\oplus 
V_2^{\langle \{s_2\} \rangle} \to 0$.  Since $V_2=\Span(\alpha_1, \alpha_2)$, any $v\in V_2$ can be written $v=a\alpha_1+b\alpha_2$ for some $a,b\in\R$, and then the coboundary map will send $a\alpha_1+b\alpha_2\mapsto(b(\alpha_2+s_1(\alpha_2)), a(\alpha_1+s_2(\alpha_1))).$
But  $\alpha_2+s_1(\alpha_2)$ is twice the projection of $\alpha_2$ onto $\alpha_1$, which is nonzero since $\alpha_1$ is not orthogonal to $\alpha_2$ (the angle between them is, in fact, $\pi-{\pi\over k}$ where $k\geq 3$ is the label on the edge between $s_1$ and $s_2$ in the Coxeter graph).  And similarly $\alpha_1+s_2(\alpha_1)$ is nonzero, so the middle map in the cochain complex is an isomorphism. 
\end{proof}

\begin{lemma}
\label{threevertices}
If $G$ is any of the finite Coxeter groups with three generators ($\A_3$, $\B_3=\CC_3$,  or $\H_3$) and $V_3$ its reflection representation,
$$
H_C^i(G,V_3) =
\begin{cases}
\R^2&\hbox{if  $i=1$,} \\
0&\mbox{otherwise.}
\end{cases}
$$

\end{lemma}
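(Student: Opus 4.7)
The plan is to apply Proposition~\ref{les} with $s$ taken to be the unique middle (degree-two) vertex of the three-vertex path graph underlying $G$, and to write $s', s''$ for the two leaves. Since $s'$ and $s''$ commute in each of $\A_3, \B_3, \H_3$, we have $G_s = \langle s', s''\rangle \cong \A_1\times\A_1$; and since $s$ is adjacent to both other generators, $G^s = \{1\}$. The fixed subspace $V^s \subset V_3$ of the reflection $s$ has dimension $2$, and the Coxeter cochain complex of the trivial group is concentrated in degree zero, so $H^0_C(\{1\}, V^s) = V^s \cong \R^2$ and $H^i_C(\{1\}, V^s) = 0$ for $i > 0$.

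The heart of the argument is showing $H^*_C(\A_1\times\A_1, V_3) = 0$. Since $V_3$ is not an external tensor product of representations of the two $\A_1$-factors, Lemma~\ref{kunneth} does not apply directly. I would instead decompose $V_3$ under the two commuting involutions into joint eigenspaces $V^{++} \oplus V^{+-} \oplus V^{-+} \oplus V^{--}$. The commutativity of $s'$ and $s''$ forces orthogonality of the simple roots $\alpha', \alpha''$, placing $\alpha' \in V^{-+}$ and $\alpha'' \in V^{+-}$ (each root lying in the fixed hyperplane of the opposite reflection). A dimension count, using $\dim V_3 = 3$ and that each of $s', s''$ has a one-dimensional $(-1)$-eigenspace on $V_3$, then pins the dimensions down to $(1,1,1,0)$; in particular, $V^{--} = 0$. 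Applying the splitting form of Lemma~\ref{ses} together with Lemma~\ref{kunneth}, $H^*_C(\A_1\times\A_1, V_3)$ decomposes as a sum of tensor products $H^*_C(\A_1, \chi') \otimes H^*_C(\A_1, \chi'')$. Because $V^{--} = 0$, in every surviving summand at least one of $\chi', \chi''$ is trivial; since $H^*_C(\A_1, K) = 0$ by Proposition~\ref{no-cycle} at $n = 1$, each such tensor product vanishes, so $H^*_C(\A_1\times\A_1, V_3) = 0$.

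Feeding both computations into the long exact sequence of Proposition~\ref{les}, the vanishing of $H^*_C(G_s, V_3)$ produces isomorphisms $H^i_C(G, V_3) \cong H^{i-1}_C(\{1\}, V^s)$, from which $H^1_C(G, V_3) \cong V^s \cong \R^2$ and $H^i_C(G, V_3) = 0$ for $i \neq 1$, exactly as claimed. The main obstacle is the eigenspace-dimension analysis that pins down $V^{--} = 0$; once this structural input is in place, the vanishing of the $\A_1\times\A_1$-cohomology, and hence the theorem, follows mechanically from the K\"unneth formula and the previously-established one-generator computation.
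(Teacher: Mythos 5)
Your proposal is correct and follows essentially the same route as the paper: apply Proposition~\ref{les} with $s$ the middle vertex, note $G^s$ is trivial so $H^0_C(G^s,V_3^s)\cong\R^2$, show $H^*_C(\A_1\times\A_1, V_3|_{\A_1\times\A_1})=0$ via K\"unneth, and read off the answer from the long exact sequence. Your joint-eigenspace decomposition $(1,1,1,0)$ with $V^{--}=0$ is exactly the paper's orthogonal decomposition $V_3=\Span(\alpha_1)\oplus\Span(\alpha_3)\oplus C$ (namely $V^{-+}=\Span(\alpha_1)$, $V^{+-}=\Span(\alpha_3)$, $V^{++}=C$), just phrased in eigenspace language rather than by naming explicit generating vectors.
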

\begin{proof}
Let $s$ be the middle vertex in the Coxeter graph of $G$.  We want to apply Proposition \ref{les} for this $s$.  Then  $G^s$ is the trivial group, because if we remove $B_s(1)$, no generators are left.  Thus the Coxeter cochain complex corresponding to $G^s$ (and no generators) with coefficients in $V_3^s$ has only the copy of the coefficients $V_3^s$ in dimension zero, corresponding to the empty set,  and $H^*_C(G^s,V_3^s)$ consists only of a copy of $V_3^s\cong \R^2$ in dimension zero.

On the other hand, if we only remove the middle vertex $s$ itself, we are left with two commuting generators.  Thus $G_s\cong \A_1\times \A_1$ is generated by two commuting reflections, i.e., reflections through planes $\alpha_1^\perp$ and $\alpha_3^\perp$, where 
$\alpha_1\perp \alpha_3$.
So there is an orthogonal decomposition $V_3=\Span(\alpha_1)\oplus \Span(\alpha_3)\oplus C$, where $C$ is the one-dimensional orthogonal complement to the $\Span(\alpha_1,\alpha_3)$,  on which both $s_i$ act trivially since it is  orthogonal to both $\alpha_i$.

We get that
$$V_3 |_{\A_1\times \A_1} \cong V_1\boxtimes\R \oplus \R\boxtimes V_1\oplus \R\boxtimes\R,$$
and so by  Lemma \ref{kunneth}, 
\begin{equation*}
\begin{split}
 H  ^*_C(  \A_1   \times \A_1, V_3   |_{\A_1\times   \A_1} & )
\\
 \cong 
H^*_C(\A_1,V_1) 
\otimes &
H^*_C(\A_1,\R)
\oplus
H^*_C(\A_1,\R)
\otimes
H^*_C(\A_1,V_1)
\\ &\oplus
H^*_C(\A_1,\R)
\otimes
H^*_C(\A_1,\R)
 =0
\end{split}
\end{equation*}
by Proposition \ref{no-cycle}.
It therefore will contribute nothing to $H^*_C(G,V_3)$ which will,  by Proposition \ref{les}, be zero everywhere except
$ H_C^1(G,V_3)\cong H^0_C(G^s,V_3^s)\cong \R^2$.
\end{proof}

\begin{prop}
\label{AnVncoh}
Let $G$ be a finite Coxeter group with $n$ generators whose Coxeter graph consists of a single line, and let $V_n$ be its reflection representation.  Then for all $i\ge 0$,
$$
H_C^i(G,V_n) =
\begin{cases}
\R^{2i}&\hbox{if  $n=3i$,} \\
\R^{i+1}&\hbox{if $n=3i+1$,}\\
\R^{i-1}&\hbox{if  $n=3i-1$, }\\
0&\mbox{otherwise.}
\end{cases}
$$
\end{prop}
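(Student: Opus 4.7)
The plan is induction on $n$, applying Proposition~\ref{les} with a vertex $s$ chosen so that the resulting long exact sequence degenerates cleanly. The base cases $n\le 3$ are supplied by Lemmas~\ref{onevertex}, \ref{twovertices}, and \ref{threevertices}.

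For $n\ge 4$, I would label the generators $s_1,\ldots,s_n$ in order along the line and choose $s$ according to $n\bmod 3$. When $n\not\equiv 2\pmod 3$, take $s=s_2$, so that $G_s=\langle s_1\rangle\times\langle s_3,\ldots,s_n\rangle$ is the product of $\A_1$ with a line-graph Coxeter group of rank $n-2$, and $G^s=\langle s_4,\ldots,s_n\rangle$ is a line-graph Coxeter group of rank $n-3$. When $n\equiv 2\pmod 3$, take $s=s_3$, so that $G_s$ is the product of a rank-$2$ line-graph Coxeter group with one of rank $n-3$, and $G^s$ is the product of $\A_1$ with a line-graph Coxeter group of rank $n-4$. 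In each situation Maschke's theorem splits the short exact sequences expressing the one- or two-dimensional trivial quotients, yielding external-tensor-product decompositions such as
$$V_n|_{G_s}\cong V_1\boxtimes K\oplus K\boxtimes V_{n-2}\oplus K\boxtimes K,\qquad V_n^s|_{G^s}\cong V_{n-3}\oplus K\oplus K$$
when $s=s_2$, and analogous ones when $s=s_3$.

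I would then compute $H_C^*(G_s,V_n)$ and $H_C^*(G^s,V_n^s)$ by combining Lemma~\ref{ses} (split case) with Lemma~\ref{kunneth} (K\"unneth), using as inputs the inductively known values of $H_C^*$ of smaller line-graph Coxeter groups on their reflection representations, together with $H_C^*(\A_1,V_1)=K$ concentrated in degree $0$ (Lemma~\ref{onevertex}), the vanishings $H_C^*(\A_1,K)=0$ (Proposition~\ref{no-cycle}) and $H_C^*(H,V_2)=0$ for any rank-$2$ line-graph Coxeter group $H$ (Lemma~\ref{twovertices}), and the formula of Proposition~\ref{no-cycle} for trivial coefficients. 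In each residue class modulo $3$, one of these vanishings combined with degree parities forces one side of the LES to be zero: concretely $H_C^*(G_s,V_n)=0$ when $n\equiv 0\pmod 3$, and $H_C^*(G^s,V_n^s)=0$ when $n\equiv 2\pmod 3$, while for $n\equiv 1\pmod 3$ both sides are nonzero but concentrated in the adjacent degrees $i-1$ and $i$.

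The conclusion then follows from Proposition~\ref{les}: in the first two subcases the LES collapses to an isomorphism between $H_C^*(G,V_n)$ and the single surviving side, and in the third subcase it reduces to a short exact sequence $0\to H_C^{i-1}(G^s,V_n^s)\to H_C^i(G,V_n)\to H_C^i(G_s,V_n)\to 0$, whose dimensions add to $i+1$. No analysis of connecting homomorphisms is required. The main subtlety lies in the residue-dependent choice of $s$: a uniform choice $s=s_2$ would leave an ambiguous connecting map when $n\equiv 2\pmod 3$, which is why one must switch to $s=s_3$ there and exploit the disconnected $\A_1$ factor of the resulting $G^s$ to kill $H_C^*(G^s,V_n^s)$ through $H_C^*(\A_1,K)=0$.
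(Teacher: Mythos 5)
Your proof is correct, and the computations (once one carries out the K\"unneth bookkeeping) do give the stated dimensions, but your route differs from the paper's. The paper uses a \emph{uniform} choice of vertex, $s=s_{n-2}$ (the third vertex from one end), for every $n\ge 4$. This yields $G_s \cong \A_{n-3}\times\A_2$ and $G^s\cong\A_{n-4}\times\A_1$, and the long exact sequence of Proposition~\ref{les} then collapses to a short exact sequence in every residue class $n\bmod 3$, with no connecting-map ambiguity; one simply reads off dimensions from the K\"unneth computation in one pass. Your approach instead selects $s=s_2$ or $s=s_3$ according to $n\bmod 3$, which buys you outright isomorphisms (one entire side of the LES vanishing) in two of the three cases. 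The trade-off is the extra case split, and your observation that the uniform choice $s=s_2$ would leave an undetermined map $H_C^{i-1}(G_s,V_n)\to H_C^{i-1}(G^s,V_n^s)$ when $n\equiv 2\pmod 3$ is a real one; the paper sidesteps it by choosing $s_{n-2}$ instead of $s_2$, which keeps the two sides of the LES concentrated in complementary degree ranges for all $n$. One cosmetic remark: the splittings you invoke via Maschke are realized in the paper simply by orthogonality of roots attached to commuting generators inside the real reflection representation, which is a bit more transparent and works verbatim over $\R$; but your Maschke argument is also valid in characteristic $0$.
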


\begin{proof}
We have dealt with the cases where $n\leq 3$ in Lemmas \ref{onevertex}, \ref{twovertices}, and \ref{threevertices}, so we are left with the cases $\A_n$ ($n\ge 4$), $\B_n=\CC_n$ ($n\ge 4$), $\FF_4$ and $\H_4$.  We will write out the proof for the $\A_n$.  The proof for all the groups with four generators  is identical to the one for $\A_4$; that for the higher for $\B_n=\CC_n$ is the same except that  one should replace $\A_{n-3}$ with $\B_{n-3}$, and for $n\geq6$ one should do the same for $\A_{n-4}$.

The proof is by induction on $n\geq 4$.  We will use Proposition \ref{les}, with $s=s_{n-2}$, 
corresponding to the third vertex from the right.   If each $s_i$, $1\leq i\leq n$ is a reflection along the hyperplane perpendicular to $\alpha_i$, then as in the proof of Lemma \ref{threevertices}, vectors $\alpha_i,\alpha_j$ which correspond to distinct commuting $s_i,s_j$ are orthogonal to each other.
We have
\begin{equation*}
\langle S \setminus\{s\}\rangle =\A_{n-3}\times  \A_2, \quad \langle S \setminus B_s(1)\rangle =\A_{n-4} \times   \A_1,
\end{equation*}
and we have an orthogonal decomposition
\begin{equation}
\label{threeparts}
V_n\cong \Span(\alpha_1,\ldots, \alpha_{n-3}) \oplus\Span(\alpha_{n-1},\alpha_n)\oplus C,
\end{equation}
where $C$ is the one-dimensional orthogonal complement of the other two summands, so
$$V_n|_{\A_{n-3}\times \A_2} = V_{n-3}\boxtimes \R \oplus \R\boxtimes V_2\oplus \R\boxtimes \R.$$
By Lemma \ref{kunneth}, 
\begin{equation*}
\begin{split}
& H^*_C({\A_{n-3}\times \A_2},V_n|_{\A_{n-3}\times \A_2})
\\
& \cong
H^*_C(\A_{n-3},V_{n-3})\otimes
H^*_C(\A_2, \R)
\\
&\quad \oplus
H^*_C(\A_{n-3},\R)\otimes
H^*_C(\A_2, V_2)
\\
&\quad \oplus
H^*_C(\A_{n-3},\R)\otimes
H^*_C(\A_2, \R).
\end{split}
\end{equation*}
Applying Theorem \ref{no-cycle} to $H^*_C(\A_k, \R)$,  and applying the inductive hypothesis to $H^*_C(\A_k, V_k)$, we get 
\begin{equation}
\label{wos}
\begin{split}
 H^i_C( & \A_{n-3}\times  \A_2, V_n|_{\A_{n-3}\times \A_2})
 \\
 &=
 \begin{cases}
\R^{2(i-1)}\oplus 0\oplus\R=\R^{2i-1}&\hbox{if } n=3i, \\
\R^{i}\oplus 0\oplus 0=\R^i&\hbox{if }n=3i+1,\\
\R^{i-2}\oplus 0\oplus\R=\R^{i-1}&\hbox{if } n=3i-1, \\
0&\mbox{otherwise.}
\end{cases}
\end{split}
\end{equation}
Similarly, we can decompose
\begin{equation*}
\label{threeprimeparts}
V_n\cong \Span(\alpha_1,\ldots, \alpha_{n-4}) \oplus\Span(\alpha_n)\oplus C,
\end{equation*}
where $C$ is the orthogonal complement of the other two summands, and this time the dimension of $C$ is three.  Now the first two summands are perpendicular to $\alpha_{n-2}$, so $s_{n-2}$ acts nontrivially only on the third summand and $V_n^{s_{n-2}}$ has a splitting like
(\ref{threeprimeparts}) but with a two-dimensional $C$, and 

$$V^s_n|_{\A_{n-4}\times \A_1} = V_{n-4}\boxtimes \R \oplus \R\boxtimes V_1\oplus ( \R\boxtimes \R)^{\oplus 2}.$$
By Lemma \ref{kunneth} (noting that $H^*_C(\A_1,\R)=0$ by Theorem \ref{no-cycle}), we get
\begin{equation*}
\begin{split}
& H^*_C(\A_{n-4}\times \A_1,V_n|_{\A_{n-4}\times \A_1})
\\
& \cong
H^*_C(\A_{n-4},V_{n-4})\otimes
H^*_C(\A_1, \R)
\\
&\quad \oplus
H^*_C(\A_{n-4},\R)\otimes
H^*_C(\A_1, V_1)
\\
&\quad \oplus
\bigl(H^*_C(\A_{n-4},\R)\otimes
H^*_C(\A_1, \R)\bigr)^{\oplus 2}
\\
&\cong
H^*_C(W(A_{n-4}),\R)\otimes
H^*_C(\A_1, V_1)\cong H^*_C(W(A_{n-4}),\R) ,
\end{split}
\end{equation*}
which by  Theorem \ref{no-cycle} gives
\begin{equation}
\label{woball}
H^i_C(\A_{n-4}\times \A_1,V_n|_{\A_{n-4}\times \A_1})=
 \begin{cases}
\R&\hbox{if } n=3i+3, \\
\R&\hbox{if }n=3i+4,\\
0&\mbox{otherwise.}
\end{cases}
\end{equation}
Putting together the information in equations (\ref{wos}) and (\ref{woball}), we get that for $G=\A_n$, $V=V_n$, and $s=s_{n-2}$, the long exact sequence of Proposition \ref{les} vanishes everywhere except the segment
$$\to H_C^{i-1}(\bigl(\A_n\bigr)^s,V_n ^s)\to H_C^i(\A_n,V_n )\to H_C^i(\bigl(\A_n\bigr)_s,V_n )\to$$
where $n\in \{3i, 3i+1, 3i-1\}$, meaning that for these $n$,
\begin{equation*}
\begin{split}
 H_C^i(\A_n,V_n )
 & \cong  H_C^{i-1}\bigl(\bigl(\A_n\bigr)^s,V_n ^s\bigr)\oplus H_C^i(\bigl(\A_n\bigr)_s,V_n\bigr)
 \\
 & \cong
  \begin{cases}
\R\oplus \R^{2i-1}&\hbox{if } n=3i, \\
\R\oplus \R^i&\hbox{if }n=3i+1,\\
0\oplus \R^{i-1}&\hbox{if }n=3i-1,\\
0&\mbox{otherwise,}
\end{cases}
\end{split}
\end{equation*}
which is exactly what we needed to show.
\end{proof}

\begin{prop}
\label{DnVncoh}
Let  $n\geq 4$, and let $V_n$ be the reflection representation of $\D_n$.  Then for all $i\ge 0$,
$$
H_C^i(\D_n,V_n) =
\begin{cases}
\R^{i+2}&\hbox{if  $n=3i+3$,} \\
\R^{2i+3}&\hbox{if $n=3i+4$,}\\
\R^{i+1}&\hbox{if  $n=3i+5$,} \\
0&\mbox{otherwise.}
\end{cases}
$$
\end{prop}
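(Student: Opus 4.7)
The plan is to prove Proposition \ref{DnVncoh} by induction on $n$, peeling off a single vertex via Proposition \ref{les}, in direct parallel with the proof of Proposition \ref{AnVncoh}. Label the generators of $\D_n$ so that $s_1, \ldots, s_{n-2}$ form the long chain and the two legs $s_{n-1}, s_n$ attach to $s_{n-2}$. Take $s = s_1$, the endpoint opposite the branching vertex. For $n \geq 6$ this gives $G_s = \langle s_2, \ldots, s_n\rangle \cong \D_{n-1}$ and $G^s = \langle s_3, \ldots, s_n\rangle \cong \D_{n-2}$; for smaller $n$, either $G^s$ or $G_s$ degenerates (for instance $G^s \cong \A_1\times\A_1$ when $n=4$ and $G^s\cong \A_3$ when $n=5$), and these serve as base cases, most conveniently handled by the alternative choice $s = s_{n-2}$ where $G^s\cong \A_{n-4}$ and $G_s\cong \A_{n-3}\times\A_1\times\A_1$, the latter forcing $H_C^*(G_s, V_n|_{G_s}) = 0$ by the K\"unneth argument (since $H_C^*(\A_1,\R)=0$).

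Next, I would decompose the reflection representation as in the proof of Proposition \ref{AnVncoh}. Since $s_1$ is adjacent only to $s_2$, the simple roots $\alpha_3, \ldots, \alpha_n$ (respectively $\alpha_2, \ldots, \alpha_n$) are orthogonal to $\alpha_1$ and span the reflection representation of $G^s$ (resp.\ $G_s$), with a $1$-dimensional trivial orthogonal complement inside $V_n^s$ (resp.\ $V_n$). Thus $V_n|_{G_s} \cong V_{n-1}\oplus \R$ and $V_n^s|_{G^s} \cong V_{n-2}\oplus \R$, and Lemma \ref{ses} splits
\[H_C^k(G_s, V_n|_{G_s}) \cong H_C^k(\D_{n-1}, V_{n-1}) \oplus H_C^k(\D_{n-1}, \R),\]
with an analogous decomposition for $G^s$. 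The first summand in each case comes from the inductive hypothesis. The second summand requires a parallel computation of $H_C^*(\D_k, \R)$, which by Remark \ref{geometric} equals the shifted reduced cohomology of $|I(\D_k)|$. Direct inspection handles the small cases (for example $|I(\D_4)|$ is a filled triangle on $\{s_1, s_3, s_4\}$ together with the isolated vertex $s_2$, so $\tilde H^0 = \R$ and the higher reduced cohomology vanishes), and an inductive fold argument along the lines of Ehrenborg--Hetyei \cite{EH} should establish the general pattern.

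Feeding all this into the long exact sequence
\[\cdots \to H_C^{k-1}(G^s, V_n^s)\to H_C^k(\D_n, V_n)\to H_C^k(G_s, V_n) \to H_C^k(G^s, V_n^s) \to \cdots\]
and splitting into cases by $n \bmod 3$ should yield the claimed dimensions. The hard part is the connecting homomorphism between the two input groups: in some residue classes both $H_C^k(G_s, V_n)$ and $H_C^k(G^s, V_n^s)$ are nonzero in the same degree, and the dimension of $H_C^k(\D_n, V_n)$ is controlled not by dimension arithmetic alone but by the rank of the map $H_C^k(G_s, V_n) \to H_C^k(G^s, V_n^s)$. Pinning this map down, most likely by verifying that it comes from the natural restriction induced by the inclusion $V_{n-2}\hookrightarrow V_{n-1}$ of reflection representations together with the inclusion $|I(\D_{n-2})|\hookrightarrow|I(\D_{n-1})|$ of independence complexes, will be the technical heart of the argument.
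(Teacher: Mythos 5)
Your proposal uses a different choice of vertex $s$ than the paper, and this creates a genuine gap that you yourself flag but do not resolve.

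The paper takes $s = s_{n-2}$, the trivalent vertex, for \emph{all} $n$, not just small $n$. With this choice $G_s \cong \A_{n-3}\times\A_1\times\A_1$ and $G^s\cong \A_{n-4}$, and the crucial payoff is that every summand of $V_n|_{G_s}$ has a trivial $\A_1$-factor, so $H^*_C(G_s,V_n|_{G_s})=0$ by K\"unneth and Proposition~\ref{no-cycle}. The long exact sequence of Proposition~\ref{les} then degenerates into the isomorphism $H^i_C(\D_n,V_n)\cong H^i_C(\A_{n-4},V_n^s|_{\A_{n-4}})$, and the right-hand side is computed directly from $V^s_n|_{\A_{n-4}}\cong V_{n-4}\oplus\R^3$ using Propositions~\ref{AnVncoh} and~\ref{no-cycle}. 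There is no induction on $n$ within the $\D$-family, and no connecting map to analyze; one side of the sequence simply vanishes. You in fact mention exactly this choice of $s$ and this vanishing phenomenon, but demote it to handling base cases, when it is actually the whole proof.

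By taking $s = s_1$ (the far endpoint) for the general step, you put $\D_{n-1}$ and $\D_{n-2}$ on the two sides of the exact sequence. Both are nonzero in overlapping degrees for the relevant residues of $n$ modulo $3$ (for instance when $n=3i+5$, the inductive hypothesis gives $H^i_C(\D_{n-1},V_{n-1})=\R^{2i+3}$ and $H^i_C(\D_{n-2},V_{n-2})=\R^{i+2}$, while the target $H^i_C(\D_n,V_n)$ should be $\R^{i+1}$). As you correctly observe, the answer then depends on the rank of a connecting map which you do not compute; asserting that it ``should'' come from a restriction map and ``will be the technical heart'' is not a proof. You also introduce the auxiliary computation of $H^*_C(\D_k,\R)$, which is not needed in the paper's argument and is only sketched in your proposal via an appeal to Ehrenborg--Hetyei without verification of the $\D$-type pattern. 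Both of these are gaps. The fix is to discard the induction within type $\D$ entirely and run the $s=s_{n-2}$ argument uniformly, which is what the paper does.
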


\begin{proof}
We will use Proposition \ref{les}   with $s=s_{n-2}$, the trivalent vertex in the Coxeter graph.
We have
\begin{equation*}
\langle S \setminus\{s\}\rangle =\A_{n-3}\times  \A_1\times \A_1, \quad \langle S \setminus B_s(1)\rangle =\A_{n-4}.
\end{equation*}
Since the roots corresponding to distinct commuting generators are orthogonal to each other, we have an orthogonal decomposition
\begin{equation*}
V_n\cong \Span(\alpha_1,\ldots ,\alpha_{n-3}) \oplus\Span(\alpha_{n-1})\oplus\Span(\alpha_n)\oplus C,
\end{equation*}
where $C$ is the one-dimensional orthogonal complement of the other three summands, so
\begin{equation}
\label{Dsplit}
V_n|_{\A_{n-3}\times \A_1\times A_1} = V_{n-3}\boxtimes \R\boxtimes \R \oplus \R\boxtimes V_1 \boxtimes \R
\oplus \R\boxtimes \R \boxtimes V_1
\oplus \R\boxtimes \R\boxtimes \R.
\end{equation}
By Lemma \ref{kunneth}, however, since $H^*_C(\A_1;\R)=0$ (by Theorem \ref{no-cycle})
and each summand in (\ref{Dsplit})
 has at least one factor which is the trivial representation of $\A_1$,
$H^*_C({\A_{n-3}\times \A_1\times \A_1},V_n|_{\A_{n-3}\times \A_1\times \A_1})=0$.

By Proposition \ref{les},  $H^*_C(\D_n, V_n)\cong H^*_C(\A_{n-4}, V_n^{s_{n-2}}|_{A_{n-4}})$.
There is an orthogonal decomposition
\begin{equation*}
V_n\cong \Span(\alpha_1,\ldots, \alpha_{n-4}) \oplus C,
\end{equation*}
where $C$ is four-dimensional, and $s_{n-2}$ acts trivially on the first summand and nontrivially on $C$.
 Then
 $V^s_n|_{\A_{n-4}} = V_{n-4}\oplus\R^3$, 
 and by Theorem \ref{AnVncoh} for the first summand and Theorem \ref{no-cycle} for the second, we get
\begin{equation*}
H^i_C(\D_n, V_n)\cong H^i_C(\A_{n-4}, V_n^{s_{n-2}}|_{A_{n-4}})
\cong
 \begin{cases}
\R^{2i} \oplus\R^3&\hbox{if  $n-4=3i$,} \\
\R^{i+1} \oplus 0&\hbox{if $n-4=3i+1$,}\\
\R^{i-1} \oplus \R^3&\hbox{if $n-4=3i-1$,}\\
0&\mbox{otherwise.}
\end{cases}
\end{equation*}
which is exactly what we needed to show.
\end{proof}

\begin{prop}
\label{EnVncoh}
Let   $V_n$ be the reflection representation of $\E_n$, for $n=6, 7,8$.  Then   $H^2_C(\E_6, V_6)\cong \R$, $H^2_C(\E_7, V_7)\cong \R^2$,
$H^1_C(\E_8, V_8)\cong \R$, and all the rest of the Coxeter cochain cohomology groups of the $\E_n$ vanish.
\end{prop}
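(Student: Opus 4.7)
The plan is to apply Proposition~\ref{les} to each of $\E_6, \E_7, \E_8$ with $s$ taken to be the unique trivalent vertex of the Dynkin diagram, paralleling the treatment of $\D_n$ in Proposition~\ref{DnVncoh}. Deleting the trivalent vertex decomposes the Coxeter graph into three type-$\A$ branches, so $G_s$ and $G^s$ become products of type-$\A$ Coxeter groups whose Coxeter cohomology has already been computed in Propositions~\ref{no-cycle} and \ref{AnVncoh}. Concretely, for $n \in \{6,7,8\}$, I would identify $G_s \cong \A_2 \times \A_1 \times \A_{n-4}$ and $G^s \cong \A_1 \times \A_{n-5}$.

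Next, using the orthogonality of roots corresponding to commuting simple reflections (exactly as in the proof of Proposition~\ref{DnVncoh}), I would decompose
$$
V_n|_{G_s} \cong V_2 \boxtimes \R \boxtimes \R \,\oplus\, \R \boxtimes V_1 \boxtimes \R \,\oplus\, \R \boxtimes \R \boxtimes V_{n-4} \,\oplus\, \R \boxtimes \R \boxtimes \R
$$
and
$$
V_n^s|_{G^s} \cong V_1 \boxtimes \R \,\oplus\, \R \boxtimes V_{n-5} \,\oplus\, (\R \boxtimes \R)^{\oplus 3},
$$
where the last summands are trivial complements of dimensions $1$ and $3$ respectively (arising from the rank count $2+1+(n-4)+1 = n = 1+(n-5)+3$).

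Lemma~\ref{kunneth} will then express the cohomologies of $G_s$ and $G^s$ as direct sums of tensor products of cohomologies of type-$\A$ groups, most of which vanish thanks to the identities $H^*_C(\A_1,\R) = 0$ (Proposition~\ref{no-cycle}, since $1 \notin \{3i-1,3i\}$), $H^*_C(\A_2, V_2) = 0$ (Lemma~\ref{twovertices}), and $H^*_C(\A_4,\R) = 0$ (Proposition~\ref{no-cycle} again). The surviving pieces can be read off from Lemma~\ref{onevertex}, Proposition~\ref{no-cycle}, and Proposition~\ref{AnVncoh}, and will contribute at most a single copy of $\R$ in one low cohomological degree on each side.

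Finally, I would feed these into the long exact sequence of Proposition~\ref{les}. For $\E_6$, the $G^s$ contribution vanishes entirely, so the answer is read off directly from $G_s$. For $\E_7$ and $\E_8$, both $G_s$ and $G^s$ contribute, but I expect the connecting homomorphisms in the long exact sequence to be forced to zero because the surviving groups of $G_s$ and $G^s$ land in cohomological degrees that preclude a nonzero differential (either the source or the target of any candidate differential is zero). The long exact sequence will then break into short exact sequences whose ranks give the claimed dimensions. The main obstacle is simply the careful combinatorial bookkeeping of which Künneth summands survive and in which cohomological degrees; once those are aligned, extracting $H^*_C(\E_n,V_n)$ is mechanical.
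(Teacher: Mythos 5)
Your proposal is exactly the paper's proof: take $s$ to be the trivalent vertex, identify $G_s \cong \A_2 \times \A_1 \times \A_{n-4}$ and $G^s \cong \A_1 \times \A_{n-5}$, decompose $V_n|_{G_s}$ and $V_n^s|_{G^s}$ by orthogonality, reduce via K\"unneth and $H^*_C(\A_1,\R)=0$ to a single surviving tensor factor on each side, and feed the answers into the long exact sequence of Proposition~\ref{les}. Two small remarks. First, your list of vanishing identities is slightly off: $H^*_C(\A_2,V_2)=0$ is never needed (the $V_2\boxtimes\R\boxtimes\R$ summand already dies because its middle factor is $\R$ over $\A_1$), and for $\E_8$ it is only $G^s$ that contributes, since $H^*_C(\A_4,\R)=0$ kills the whole $G_s$ side. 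Second, and more substantively, be careful with the degree shift built into Proposition~\ref{les}: the sequence reads $H^{i-1}_C(G^s,V^s)\to H^i_C(G,V)\to H^i_C(G_s,V)$, so the copy of $\R$ you find in $H^1_C(\A_1\times\A_3,V_8^s)$ lands in $H^{2}_C(\E_8,V_8)$, not $H^1_C$; you should reconcile this with the degree in the stated conclusion, and confirm the same bookkeeping in the $\E_7$ case (where it gives the correct $0\to\R\to H^2_C(\E_7,V_7)\to\R\to 0$) before signing off.
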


\begin{proof}
We will use Proposition \ref{les}, with $s=s_{4}$, the trivalent vertex in the Coxeter graph.
We have
\begin{equation*}
\langle S \setminus\{s\}\rangle =\A_{2}\times  \A_1\times \A_{n-4}, \quad \langle S \setminus B_s(1)\rangle =\A_1\times \A_{n-5}.
\end{equation*}
Since the roots corresponding to distinct commuting generators are orthogonal to each other, we have an orthogonal decomposition
\begin{equation*}
V_n\cong \Span(\alpha_1, \alpha_{3}) \oplus\Span(\alpha_{2})\oplus\Span(\alpha_5,\ldots,\alpha_n)\oplus C,
\end{equation*}
where $C$ is the one-dimensional orthogonal complement of the other three summands, so
\begin{equation}
\label{Esplit}
V_n|_{\A_{2}\times  \A_1\times \A_{n-4}} =
 V_{2}\boxtimes \R\boxtimes \R \oplus \R\boxtimes V_1 \boxtimes \R
\oplus \R\boxtimes \R \boxtimes V_{n-4}
\oplus \R\boxtimes \R\boxtimes \R.
\end{equation}
We use Lemma \ref{kunneth}, remembering that $H^*_C(\A_1;\R)=0$ (by Theorem \ref{no-cycle}).
Only one summand in 
(\ref{Esplit}) does not have the trivial representation of $\A_1$ as a factor, so
$$H^*_C(\A_{2}\times  \A_1\times \A_{n-4},V_n|_{\A_{2}\times  \A_1\times \A_{n-4}}) \cong H^*_C(\A_2,\R)\otimes H^*_C(\A_1,V_1) \otimes H^*_C(\A_{n-4},\R).$$
Using Theorem \ref{no-cycle} and Theorem \ref{AnVncoh}, the first factor is $\R$ in dimension 1, the second is $\R$ in dimension $0$, and the third one is $\R$ in dimension $1$ for $n=6,7$ but $0$ for $n=8$.  So in total we get
$$H^i_C(\A_{2}\times  \A_1\times \A_{n-4},V_n|_{\A_{2}\times  \A_1\times \A_{n-4}})
\cong
 \begin{cases}
\R&\hbox{if } i=2\hbox{ and } n=6\hbox{ or } 7, \\
0&\mbox{otherwise.}
\end{cases}
$$
We can also write the orthogonal decomposition
\begin{equation*}
V_n\cong \Span(\alpha_1)\oplus \Span(\alpha_6,\ldots, \alpha_{n}) \oplus C,
\end{equation*}
where $C$ is four-dimensional, and $s_{4}$ acts trivially on the first summand and nontrivially on $C$.
 Then
 \begin{equation*}
\label{Eprimesplit}
V_n^{s_{4}}|_{\A_1\times \A_{n-5}} =
 V_{1}\boxtimes \R
  \oplus \R\boxtimes V_{n-5}
\oplus( \R\boxtimes \R)^{\oplus 3},
\end{equation*}
and using Lemma \ref{kunneth} and $H^*_C(\A_1;\R)=0$,
$$H^*_C(\A_1\times \A_{n-5}, V_n^{s_{4}}|_{\A_1\times \A_{n-5}} )\cong H^*_C(\A_1,V_1)
\otimes H^*_C( \A_{n-5}, \R).$$
The first factor is $\R$ in dimension $0$, and the second is zero if $n=6$, but $\R$ in dimension $1$ if $n=7$ or $8$.
Thus
$$H^*_C(\A_1\times \A_{n-5}, V_n^{s_{4}}|_{\A_1\times \A_{n-5}} )\cong
 \begin{cases}
\R&\hbox{if } i=1\hbox{ and } n=7\hbox{ or } 8, \\
0&\mbox{otherwise.}
\end{cases}
$$
and the result follows by Proposition \ref{les}.
\end{proof}

\section{Arrangements and fatpoints}

In this section, we explain how the relation between the homology of the space $\X_{n,3}$ and
$\Tor^{A_{m,3}}(\C,\C)$  which was described in \cite{PRW} can be understood in terms of Coxeter cohomology for groups of type $\A$.

We begin on the configuration space side.  
Let $\Delta_{n,3}$ denote the closed $\A_{n-1}$-stable
subset of $[0,1]^n$ consisting of $n$-tuples for which some value in $[0,1]$ 
appears at least three times as a coordinate.
Following \cite{PRW}, we use Alexander duality to
identify $H_k(\X_{n,3};\C)$  as a representation of the group $\A_{n-1}$  with 
$H_{n-k}([0,1]^n, \partial[0,1]^n\cup \Delta_{n,3};\C)
\otimes \sgn $,
the sign representation appearing as the action of $\A_{n-1}$ on the orientations of $([0,1]^n,\partial[0,1]^n) $.

We consider the following simplicial decomposition of $[0,1]^n$: the vertices of the cube are the vertices, ordered lexicographically.  A $k$-simplex is determined by a partition of the index set $\{1,2,\ldots, n\}$ into an ordered $(k+2)$-tuple of subsets $(G_0,G_1,\ldots,G_k,G_{k+1})$ with $G_0$ and $G_{k+1}$ possibly empty but $G_i\neq \emptyset$ for $1\leq i\leq k$, and consists of 
\begin{equation*}
\begin{split}
\{(x_1,x_2,\ldots x_n)\in X^n:\ & {\mathrm {there\ exist} \ }0=t_0\leq t_1\leq\cdots\leq t_k\leq t_{k+1}=1\  \cr &
{\mathrm {so \ that} }\ x_j=t_i\ \forall j\in G_i\}.
\end{split}
\end{equation*}
Clearly, $\Delta_{n,3}$ is a subcomplex with respect to this structure.
The natural basis of the quotient complex of the chain complex of $[0,1]^n$ by the subcomplex associated to $\partial[0,1]^n\cup \Delta_{n,3}$ is indexed by ordered partitions 
$(G_1,G_2,\ldots,G_k)$ of $\{1,2,\ldots, n\}$, where $|G_i|\in\{1,2\}$ for all $1\leq i \leq k $.  For such an ordered partition, every coordinate of the vector $(|G_1|,|G_2|,\ldots,|G_k|)$, is $1$ (occurring $2k-n$ times) or $2$ (occurring $n-k$ times).

There is a bijection between the vectors $(|G_1|,|G_2|,\ldots,|G_k|)$, with $n-k$ coordinates equal to $2$ and the rest equal to $1$, and independent subsets $T\in I(S)$ with $|T|=n-k$, namely: let $s_j\in T$ if and only if there exists $i$ such that $j=|G_1|+|G_2|+\cdots + |G_{i-1}|$ and $|G_{i}|=2$.

There is a transitive right $S_n$ action on all ordered partitions 
$(G_1,G_2,\ldots,G_k)$ of $\{1,2,\ldots, n\}$ with a fixed vector of lengths $(|G_1|,|G_2|,\ldots,|G_k|)$ via 
$$(G_1,G_2,\ldots,G_k)\sigma =(\sigma^{-1}(G_1), \sigma^{-1}(G_2),\ldots,\sigma^{-1}(G_k)).$$
If we assume that $(G_1,G_2,\ldots,G_k)$  is an ordered partition so that $j_1\in G_{i_1}, j_2\in G_{i_2}$, 
 and $i_1<i_2$ implies $j_1< j_2$, that is: a partition which preserves the order of $\{1,2,\ldots, n\}$, then the stabilizer of $(G_1,G_2,\ldots,G_k)$ is exactly $\langle T \rangle$, where $T$ is the subset of $S$ corresponding to $(|G_1|,|G_2|,\ldots,|G_k|)$.
 
 The group $S_n$ also acts from the right on $K[S_n]^{\langle T\rangle}$, where the invariants are taken with respect to the left action.  If we assume as before that $(G_1,G_2,\ldots,G_k)$ is an ordered  partition which preserves the order of $\{1,2,\ldots, n\}$, then the stabilizer of $\prod_{s_j\in T}(\id + s_j)\in K[S_n]^{\langle T\rangle}$ is again $\langle T\rangle$, and the orbit of $\prod_{s_j\in T}(\id + s_j)$ is all of $K[S_n]^{\langle T\rangle}$ (being the image of the $\langle T\rangle$-trace).  Using all this we define our isomorphism
$$\phi:\ C_k([0,1]^n, \bigl(\partial [0,1]^n\cup\Delta_{n,3}\bigr); K) \to X^{n-k}_C$$
as follows:

Given an ordered partition $(G_1,G_2,\ldots,G_k)$ which preserves the order of $\{1,2,\ldots, n\}$ as above,
$$\phi \bigl( (G_1,G_2,\ldots,G_k)\bigr)= (-1)^{\sum_{j=1}^{k-1} (|G_{j+1}|+\cdots + |G_k|)}
\prod_{s_j\in T}(\id + s_j)$$
in $K[S_n]^{\langle T\rangle}$, where $T$ is the independent subset of $S$ which corresponds to  $(|G_1|,|G_2|,\ldots,|G_k|)$ and therefore consists of $n-k$ elements.  It is easily seen that this induces an isomorphism of chain complexes.
We deduce the following theorem:

\begin{thm}
\label{complementvsCoxeter}
There is an isomorphism of right $S_n$-representations
\begin{equation*}
\begin{split}
H_k(X_{n,3}; \C)\cong H^{n-k}_C(\A_{n-1}, \C[S_n])\otimes\sgn 
\end{split}
\end{equation*}
for all $0\leq k \leq n$, where $\C[S_n]$ is viewed as a $\A_{n-1}\cong S_n$-module with respect to left multiplication.   
\end{thm}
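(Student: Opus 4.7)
The plan is to identify $H_k(X_{n,3};\C)$, via Alexander duality, with the relative homology $H_{n-k}([0,1]^n, \partial[0,1]^n \cup \Delta_{n,3}; \C) \otimes \sgn$, and then to exhibit an explicit $S_n$-equivariant isomorphism of chain complexes between the relative simplicial chain complex just described and the Coxeter cochain complex $X^\bullet_C(\A_{n-1}, \C[S_n])$ (with degrees reindexed by $k \mapsto n-k$).

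First I would set up the bookkeeping. The preceding discussion has already fixed a simplicial decomposition of $[0,1]^n$ whose $k$-simplices are indexed by ordered partitions $(G_0, G_1, \ldots, G_k, G_{k+1})$ of $\{1, \ldots, n\}$ (with $G_0, G_{k+1}$ possibly empty). The subcomplex $\partial[0,1]^n \cup \Delta_{n,3}$ consists precisely of those simplices that either touch the boundary (so $G_0 \neq \emptyset$ or $G_{k+1} \neq \emptyset$) or involve some $|G_i| \geq 3$. The quotient complex therefore has a natural $\C$-basis indexed by ordered partitions $(G_1, \ldots, G_k)$ with every $|G_i| \in \{1,2\}$. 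I would record carefully how the simplicial boundary in $[0,1]^n$ acts on such a partition: deleting $G_0$ or $G_{k+1}$ lands in $\partial[0,1]^n$ and is killed, while merging two adjacent blocks $G_j, G_{j+1}$ produces $0$ in the quotient whenever $|G_j| + |G_{j+1}| \geq 3$. So only the ``split'' operations on blocks of size $2$ (replacing a size-$2$ block by two adjacent size-$1$ blocks) survive, together with appropriate incidence signs.

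Next I would verify the combinatorial bijection already described: a length vector $(|G_1|, \ldots, |G_k|) \in \{1,2\}^k$ with $n-k$ twos determines an independent set $T \in I(S)$ of size $n-k$ in the type $\A_{n-1}$ Coxeter graph, via $s_j \in T$ iff $j = |G_1| + \cdots + |G_{i-1}|$ with $|G_i| = 2$. The key structural observation is that if $(G_1,\ldots,G_k)$ is the order-preserving representative of its $S_n$-orbit, then $\mathrm{Stab}((G_1,\ldots,G_k)) = \langle T \rangle$, so the $S_n$-orbit of this partition is in bijection with $\C[S_n]/\langle T\rangle$, and the map $\sigma \mapsto \sigma \cdot \prod_{s_j \in T}(\id + s_j)$ sends this orbit bijectively to a basis of $\C[S_n]^{\langle T\rangle}$ (the image of the $\langle T\rangle$-trace). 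This is what makes the map $\phi$ of the excerpt well-defined and $S_n$-equivariant.

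With $\phi$ in hand, the main work is to check that it commutes with the differentials, i.e.\ that the relative simplicial boundary on the left is sent to the Coxeter coboundary $d = \sum_{T,s} d_{T,s}$ on the right, up to the sign $(-1)^{\sum_{j<k} (|G_{j+1}|+\cdots+|G_k|)}$. Each surviving boundary move merges a size-$2$ block at position $i$ into two size-$1$ blocks, which on the Coxeter side corresponds to removing an element $s \in T$; I would match these move-by-move, checking that the simplicial boundary sign (governed by the position of the split inside the simplex) matches the sign $(-1)^{|\{t \in T \setminus \{s\} : t < s\}|}$ built into $d_{T\setminus\{s\}, s}$, and that the factor $(\id + s)$ appearing in $\phi$ accounts for the $v \mapsto v + s(v)$ in the definition of $d_{T\setminus\{s\}, s}$. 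The tensoring with $\sgn$ is absorbed by passing from orientations of $([0,1]^n, \partial[0,1]^n)$ to the algebraic chain complex. The main obstacle is this sign verification: the prefactor $(-1)^{\sum_{j=1}^{k-1}(|G_{j+1}|+\cdots+|G_k|)}$ was chosen precisely so that all the signs line up, and one must carefully check this against both the ordering of $S$ used in the Coxeter cochain complex and the lexicographic ordering of simplex vertices. Once chain-complex isomorphism and $S_n$-equivariance of $\phi$ are established, taking homology and invoking Alexander duality gives the stated isomorphism.
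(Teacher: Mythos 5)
Your proposal follows the paper's argument step for step: Alexander duality, the simplicial decomposition of $[0,1]^n$ indexed by ordered set partitions, the identification of $S_n$-orbits of order-preserving partitions with $\C[S_n]^{\langle T\rangle}$ via stabilizer computations, and the map $\phi$ with its sign prefactor. One correction worth recording before you fill in the differential check: the surviving boundary moves in the relative simplicial chain complex $C_k\to C_{k-1}$ \emph{merge} two adjacent size-$1$ blocks into a single size-$2$ block (increasing the number of twos from $n-k$ to $n-k+1$), which on the Coxeter side corresponds to \emph{adding} an element $s$ to $T$, i.e.\ to $d_{T,s}$ with $|T|=n-k$; you describe both directions in reverse (``split'' and ``removing $s\in T$''), and that needs to be straightened out before the sign match can be verified.
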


The decomposition 
$$\C[S_n] = \bigoplus_V V\boxtimes V^*$$ 
and the self-duality of every irreducible representation of
$S_n$ shows that the multiplicity of the $V$-isotypic component of $H_k(X_{n,3}; \C)$ is $H^{n-k}_C(\A_{n-1},V\otimes\sgn)$.

On the Tor side, we follow \cite{PRW}, using the minimal resolution of $K$ as $A$-module to compute 
$\Tor_i^{A_{n,3}}(\C,\C)$ as the $i$th homology of the complex
$$\cdots
 \M\otimes\M \otimes \M\stackrel{-d_1+d_2}{\to}
\M \otimes \M\stackrel{-d_1}{\to}
\M\stackrel{0}{\to} \C,$$
where $\M$ is the maximal ideal of $A$.  As $\GL_n(\C)$-representation, $\M\cong V\oplus \Sym^2 V$,
where $V$ is the standard representation of $\GL_n(\C)$.
Thus
\begin{equation}
\label{m-sum}
\M^{\otimes i} \cong
 ( V\oplus S^2 V)^{\otimes i}
 \cong\bigoplus_{0\leq j\leq i}
 \ \bigoplus_{\substack{\Sigma\subset\{1,\ldots, i\} \\ |\Sigma| = j}}
 (V^{\otimes(i+j)} ) ^{\langle T_\Sigma \rangle},
 \end{equation}
 where the permutation group $\A_{i+j-1}\cong S_{i+j}$ permutes the $i+j$ tensored factors in $V^{\otimes(i+j)}$, and $\langle T_\Sigma \rangle$ is the subgroup of  $\A_{i+j-1}$ 
 generated by the set of transpositions 
 $$\{s_k\colon k=h+|\{1,2,\ldots,h-1\}\cap \Sigma|, h\in \Sigma\}.$$

We define a map
$$\phi:\ 
 \bigoplus_{0\leq j\leq i }
 X^j_C(\A_{i+j-1}, V^{\otimes(i+j)})\stackrel{\cong}\to \M^{\otimes i}$$
as follows:
If $0\le k_1 - 1 < k_2 - 2 < \cdots < k_j-j < i$, i.e., if $T=\{s_{k_1},s_{k_2},\ldots, s_{k_j}\}$ is
a set of disjoint transpositions in $S_{i+j}$, and if $v\in ( V^{\otimes(i+j)})^{\langle T \rangle}$, we set
\begin{equation*}
\begin{split}
\phi  (v)= & (-1)^{\Sigma_{a=1}^j k_a} {\frac 1 {2^j}}[v] 
\\
&\in V^{\otimes(k_1-1)}
 \otimes S^2 V
 \otimes
  V^{\otimes(k_2-k_1-2)}
 \otimes S^2 V
 \otimes
\\
& \quad \quad\cdots 
  \otimes
  V^{\otimes(k_j-k_{j-1}-2)}
 \otimes S^2 V
 \otimes
 V^{\otimes(i+j-k_j-1)}.
 \end{split}
 \end{equation*}

It is easy to check that this defines an isomorphism of chain complexes, and one concludes as follows:

\begin{theorem}
\label{Torcalc}
For $i\geq 1$,
$$\Tor_i^{A_{m,3}}(\C,\C)\cong \bigoplus_{0\leq j\leq i} H^j_C(\A_{i+j-1} , V^{\otimes (i+j)})$$
as representations of $\GL_m(\C)$, where $V$ is the standard representation of $\GL_m(\C)$ on $\C^m$.
\end{theorem}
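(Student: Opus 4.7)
The plan is to verify that the explicit map $\phi$ defined in the paragraphs preceding the statement is a chain isomorphism of $\GL_m(\C)$-representations between the direct sum complex $\bigoplus_{0 \leq j \leq i} X^j_C(\A_{i+j-1}, V^{\otimes(i+j)})$ (with its Coxeter differentials assembled according to the index $j$) and the tensor power $\M^{\otimes i}$ with its reduced bar differential computing $\Tor_i^{A_{m,3}}(\C, \C)$. Taking $i$-th homology then gives the stated isomorphism.

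First I would check that $\phi$ is a degree-wise isomorphism. Under the decomposition (\ref{m-sum}), the summands of $\M^{\otimes i}$ are indexed by pairs $(j, \Sigma)$ with $\Sigma \subset \{1,\ldots,i\}$ of size $j$. On the Coxeter side, $X^j_C(\A_{i+j-1}, V^{\otimes(i+j)})$ is a direct sum over $j$-element independent subsets $T \subset \{s_1,\ldots,s_{i+j-1}\}$, equivalently over $j$-element subsets of $\{1,\ldots,i+j-1\}$ containing no two consecutive entries; the assignment $\Sigma \mapsto T_\Sigma$ recalled just before the map $\phi$ is the required bijection. On each summand, $\phi$ is the nonzero scalar $(-1)^{\sum_a k_a}/2^j$ times the $\GL_m(\C)$-equivariant map that symmetrizes, into a $\Sym^2 V$ factor, the consecutive pair of positions of $\{1,\ldots,i+j\}$ flanking each $s_{k_a} \in T_\Sigma$. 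Both operations manifestly commute with the diagonal $\GL_m(\C)$-action on tensor powers of $V$, so $\phi$ is a graded $\GL_m(\C)$-isomorphism.

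Next I would match the differentials. The bar differential $d\colon \M^{\otimes(i+1)} \to \M^{\otimes i}$ is $\sum_{\ell=1}^{i}(-1)^\ell \mu_\ell$, where $\mu_\ell$ multiplies the $\ell$-th and $(\ell+1)$-st tensor entries. In $A_{m,3}$, the only nonzero component of multiplication on $\M = V \oplus \Sym^2 V$ is $V \otimes V \to \Sym^2 V$ (all other products land in $\m^3 = 0$), so $\mu_\ell$ acts nontrivially on the $(i+1, \Sigma)$-summand precisely when $\ell, \ell+1 \notin \Sigma$, in which case it merges that pair into a single $\Sym^2 V$ factor and maps into the summand indexed by the enlarged subset $\Sigma'$. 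Under $\phi$ this corresponds exactly to adjoining to $T_\Sigma$ a new transposition $s$ that commutes with every element of $T_\Sigma$, and the factor $\frac{1}{2}(1+s)$ implicit in $d_{T,s}$ (after dividing the $2^j$) realizes the symmetrization $V \otimes V \to \Sym^2 V$.

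The main obstacle is the bookkeeping of signs: one must match the Coxeter sign $(-1)^{|\{t \in T_\Sigma \colon t < s\}|}$ with the bar sign $(-1)^\ell$ after accounting for the change in the $(-1)^{\sum_a k_a}$ prefactor caused by inserting the new generator into the sorted list of $k_a$'s. A short index calculation, using that merging at position $\ell$ in $\{1,\ldots,i+1\}$ corresponds to inserting the new generator $s_k$ with $k = \ell + |\{a \colon k_a < k\}|$, shows that the three sign contributions combine to $+1$. Once $\phi$ is checked to intertwine the differentials, $\GL_m(\C)$-equivariance persists to homology since every constituent of the construction is $\GL_m(\C)$-equivariant, and taking $i$-th homology completes the proof.
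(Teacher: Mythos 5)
Your proposal is correct and follows essentially the same approach as the paper: you take the explicit map $\phi$ already set up in the text, verify that it is a $\GL_m(\C)$-equivariant graded isomorphism via the bijection $\Sigma\mapsto T_\Sigma$, and then match the bar differential on $\M^{\otimes\bullet}$ (using that $V\otimes V\to\Sym^2V$ is the only nonzero component of multiplication in $A_{m,3}$) with the Coxeter coboundary. The paper compresses all of this into ``it is easy to check that this defines an isomorphism of chain complexes,'' so your proposal is a faithful elaboration of that claim rather than a different route; the only step you do not carry out in full is the final sign bookkeeping, which the paper also leaves implicit.
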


By means of Schur-Weyl duality, the multiplicity of any $\GL_m(\C)$-representation in $\Tor_i^{A_{m,3}}(\C,\C)$ can therefore be expressed in terms of Coxeter cohomology.

\end{document}